\def\N{\mathbb N}
\def\cI{\mathcal I}
\def\cA{\mathcal A}
\def\fJ{{\mathfrak J}}
\def\1{{\bf 1}}
\theoremstyle{plain}
\newtheorem{theorem}{Theorem}
\newtheorem{lemma}{Lemma}
\newtheorem{corollary}{Corollary}
\theoremstyle{definition}
\theoremstyle{remark}
\begin{document}

\title{On the Erd\H os-Fuchs theorem}
\author{Li-Xia Dai}
\email{lilidainjnu@163.com}
\address{School of Mathematical Sciences, Nanjing Normal University, Nanjing 210046, People's Republic of China}
\author{Hao Pan}
\email{haopan79@zoho.com}
\address{Department of Mathematics, Nanjing University,
Nanjing 210093, People's Republic of China}
\keywords{the Erd\H os-Fuchs theorem}
\subjclass[2010]{Primary 11P70; Secondary 11B13, 11B34}
\thanks{}
\begin{abstract}
We prove several extensions of the Erd\H os-Fuchs theorem.
\end{abstract}
\maketitle

\section{Introduction}
\setcounter{lemma}{0}\setcounter{theorem}{0}\setcounter{proposition}{0}\setcounter{corollary}{0}
\setcounter{equation}{0}

The well-known Gauss circle conjecture says that
\begin{equation}\label{Gauss}
|\{(a,b):\,a,b\in\N,a^2+b^2\leq n\}|=\frac{\pi}{4}n+O(n^{\frac14+\epsilon})
\end{equation}
for any $\epsilon>0$. The known best result due to Huxley is replacing $O(n^{\frac14+\epsilon})$
by $O(n^{\frac{131}{416}})$. In general,
for two non-empty subsets $A,B$ of $\N$ and $n\in\N$, define
$$
r_{A,B}(n):=|\{(a,b):\,a\in A,\ b\in B,\ a+b=n\}|.
$$
Also, define
$$
R_{A,B}(n):=\sum_{j\leq n}r_{A,B}(j),
$$
i.e.,
$$
R_{A,B}(n)=|\{(a,b):\,a\in A,\ b\in B,\ a+b\leq n\}|.
$$
Clearly (\ref{Gauss}) can be rewritten as
$$
R_{\N^2,\N^2}(n)=\frac\pi4 n+O(n^{\frac14+\epsilon}),
$$
where $\N^2=\{a^2:\,a\in\N\}$.

On the other hand, with help of the Fourier analysis, Hardy
found that the remainder $O(n^{\frac14+\epsilon})$ in (\ref{Gauss}) can't be replaced by $O\big(n^\frac14(\log n)^{\frac14}\big)$.
In 1956, for arbitrary non-empty infinite subset $A$ of $\N$, Erd\H os and Fuchs
\cite{EF56} proved that as $n\to+\infty$,
\begin{equation}\label{EF}
R_{A,A}(n)=cn+o\big(n^{\frac14}(\log n)^{-\frac12}\big)
\end{equation}
can't hold for any constant $c>0$. This result is so-called the {\it Erd\H os-Fuchs theorem}. 
Subsequently, Jurkat (unpublished), and later Montgomery and Vaughan \cite{MV90} showed that the $(\log n)^{-\frac12}$ in the remainder term of (\ref{EF})
can be removed, i.e., it is impossible that
\begin{equation}\label{JMV}
R_{A,A}(n)=cn+o(n^{\frac14}),\qquad n\to\infty
\end{equation}
for some constant $c>0$.

In \cite{Sa80}, Sark\"ozy considered the extension of the Er\H os-Fuchs theorem for the sum of are different subsets of $\N$.
Let $A=\{a_1,a_2,\ldots\}$ and $B=\{b_1,b_2,\ldots\}$ be two infinite subsets of $\N$.
Suppose that for each $i\geq 1$, $a_i$ is not very far from $b_i$, explicitly,
\begin{equation}
a_i-b_i=o\big(a_i^\frac12(\log a_i)^{-1}\big).\tag{S}
\end{equation}
Then Sark\"ozy proved that 
\begin{equation}\label{Sarkozy}
R_{A,B}(n)=cn+o\big(n^{\frac14}(\log n)^{-\frac12}\big),\qquad n\to+\infty
\end{equation}
can not hold for any constant $c>0$.

In \cite{Ho04}, Horv\'ath 
tried to remove the term $(\log n)^{-\frac12}$ in the right side of (\ref{Sarkozy}). Define
$$
A(n):=|\{a\in A:\,a\leq n\}|.
$$
Under two assumptions
\begin{align}
&a_i-b_i=o(a_i^\frac12),\qquad i\to+\infty,\tag{H1}\\
&A(n)-B(n)=O(1),\qquad n\geq 1,\tag{H2}\label{AnBnO1}
\end{align}
Horv\'ath proved that
\begin{equation}\label{Hor}
R_{A,B}(n)=cn+o(n^{\frac14}),\qquad n\to+\infty
\end{equation}
would not happen.

Notice that the assumption (\ref{AnBnO1}), which says $A(n)$ and $B(n)$ are almost equal, seems a little too strong.
So we wish to weaken the requirement for $A(n)-B(n)$, under the assumption that the difference $a_i-b_i$ is much smaller than $o(a_i^\frac12)$.
In this paper, we shall give such a generalization of  Horv\'ath's result.
\begin{theorem}\label{mainT1}
Suppose that $0\leq\alpha\leq1/4$. Let
$A=\{a_1,a_2,\ldots\}$ and $B=\{b_1,b_2,\ldots\}$ be two infinite subsets of $\N$ satisfying that\medskip

\noindent(1) $a_i-b_i=o(a_i^{\frac12-\alpha})$
as $i\to+\infty$;\medskip

\noindent(2) $A(n)-B(n)=O(n^{\alpha})$
for each $n\in\N$. \medskip

\noindent Then
\begin{equation}\label{EFAB}
R_{A,B}(n)=cn+o(n^{\frac{1}{4}}),\qquad n\to\infty
\end{equation}
can not hold for any constant $c>0$.
\end{theorem}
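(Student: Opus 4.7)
The plan is to argue by contradiction, adapting the circle-method approach of Montgomery--Vaughan and the modifications of Horv\'ath for the case $A\neq B$. Suppose toward contradiction that $R_{A,B}(n)=cn+e(n)$ with $e(n)=o(n^{1/4})$. Writing $f(z)=\sum_{a\in A}z^a$ and $g(z)=\sum_{b\in B}z^b$, the convolution identity $f(z)g(z)/(1-z)=\sum_n R_{A,B}(n)z^n$ together with the assumption yields
$$
f(z)g(z)=\frac{cz}{1-z}+(1-z)E(z),\qquad E(z)=\sum_n e(n)z^n.
$$
The strategy is to transfer this to an approximate formula for $f(z)^2$ on the circle $|z|=r=1-1/N$ via $f(z)^2=f(z)g(z)+f(z)(f(z)-g(z))$, and then derive a contradiction by applying the Montgomery--Vaughan theorem to the sequence $A$ alone. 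The technical heart of the proof is therefore to show that $f(z)(f(z)-g(z))$ is a negligible perturbation when integrated against an appropriate Fej\'er-type kernel.

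The main new input is a pair of complementary bounds on $f(z)-g(z)$. Using the pointwise inequality $|z^{a_i}-z^{b_i}|\leq|1-z|\cdot|a_i-b_i|\cdot r^{\min(a_i,b_i)}$ together with condition~(1) and Abel summation (with the a priori estimate $A(n),B(n)=O(\sqrt n)$ that follows from $R_{A,B}(n)\sim cn$), one obtains
$$
|f(z)-g(z)|=o\bigl(|1-z|\cdot N^{1-\alpha}\bigr).
$$
Separately, the identity $f(z)-g(z)=(1-z)\sum_n(A(n)-B(n))z^n$ combined with condition~(2) gives
$$
|f(z)-g(z)|=O\bigl(|1-z|\cdot N^{1+\alpha}\bigr),
$$
which supplies auxiliary integrated $L^2$ control by way of Parseval applied to $\sum_n(A(n)-B(n))z^n$. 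The first bound is sharper on the arc near $z=1$ and provides the main saving, while the second is used to absorb the contribution away from the cusp.

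With these bounds, take a Fej\'er-type kernel $K_N$ of degree $\asymp N$ and apply Cauchy--Schwarz:
$$
\int_{-\pi}^{\pi}\bigl|f(re^{i\theta})(f(re^{i\theta})-g(re^{i\theta}))\bigr|K_N(\theta)\,d\theta\leq\Bigl(\int|f|^2K_N\Bigr)^{1/2}\Bigl(\int|f-g|^2K_N\Bigr)^{1/2}.
$$
The first factor is controlled by a Parseval-type estimate; the second is estimated by combining the two bounds above over the relevant ranges of $\theta$. Substituting the result into $f(z)^2=cz/(1-z)+(1-z)E(z)+f(z)(f(z)-g(z))$ and comparing against the Montgomery--Vaughan lower bound on the integrated deviation of $f(z)^2$ from $cz/(1-z)$ (which encodes the impossibility of $R_{A,A}(n)=cn+o(n^{1/4})$) yields the desired contradiction, provided $\alpha\leq 1/4$.

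The main obstacle lies in the bookkeeping of this last step: the kernel $K_N$ and the partition of the integration region must be chosen so that the condition-(1) bound carries the main contribution near $z=1$ while the condition-(2) bound absorbs the rest, and the combined error must remain strictly smaller than the Montgomery--Vaughan margin $\asymp N^{3/4}$. The constraint $\alpha\leq 1/4$ in the hypothesis reflects exactly the regime where the perturbation $f(f-g)$ remains within the $o(n^{1/4})$ budget of the Erd\H os--Fuchs theorem; at the threshold $\alpha=1/4$ the two conditions are just barely strong enough to force the reduction to go through.
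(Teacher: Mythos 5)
Your reduction to Montgomery--Vaughan for $A$ alone via $f(z)^2=f(z)g(z)+f(z)\bigl(f(z)-g(z)\bigr)$ leaves a cross term that is too large to absorb, and this is a genuine gap. To reach the clean $o(n^{1/4})$ one must use the Jurkat/Montgomery--Vaughan differentiation trick, so the perturbation to be controlled is (up to symmetric terms) $\int_0^1\bigl|f'(z)\bigl(f(z)-g(z)\bigr)\bigr|\,\bigl|\tfrac{1-z^m}{1-z}\bigr|^2d\theta$ with $m\asymp N^{1/2}$, while the whole argument lives at scale $N^2$ (the lower bound is $\gg mN^{3/2}$, so admissible errors must be $o(N^2)$). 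Your Cauchy--Schwarz splitting gives at best $\bigl(\int|\tfrac{zf'}{1-z}|^2\bigr)^{1/2}\bigl(\int|\tfrac{f-g}{1-z}|^2\bigr)^{1/2}$. The first factor is genuinely large: since $A(n)\asymp n^{1/2}$ forces $\sum_{a\le n}a\asymp n^{3/2}$, Parseval gives $\int|\tfrac{zf'}{1-z}|^2\asymp N^4$. Even granting the optimal estimate $o(N)$ for the second factor (which already requires combining hypotheses (1) and (2); Parseval with (2) alone only yields $O(N^{1+2\alpha})$), the product is $o(N^{5/2})$, far above the $o(N^2)$ budget. The pointwise bound $|f-g|=o(|1-z|N^{1-\alpha})$ does not rescue this either: for instance $\int|f'|\,|1-z|\,\bigl|\tfrac{1-z^m}{1-z}\bigr|^2d\theta\ll\int|f'|/|1-z|\,d\theta\ll N^{5/4}\cdot N^{1/2}$, giving a contribution $o(N^{11/4-\alpha})\ge o(N^{5/2})$ for $\alpha\le 1/4$.

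The fix --- and the route the paper takes --- is to symmetrize before invoking any Erd\H os--Fuchs-type lower bound: write $(f+g)^2=4fg+(f-g)^2$, so that the perturbation is the \emph{square} of the small quantity. After differentiation the error term becomes $(f'-g')(f-g)$, and now \emph{both} Cauchy--Schwarz factors are small: the paper's Lemma \ref{NABdiffT} (which exploits the interplay of (1) and (2) via $\sum_{n\le x}\lambda(n)^2\ll x^{\alpha}\sum_{n\le x}\lambda(n)$ with $\lambda(n)=|A(n)-B(n)|$) gives $\sum_n\rho^{2n}\bigl(\sum_{a\le n}a-\sum_{b\le n}b\bigr)^2=o(N^3)$ and $\sum_n\rho^{2n}\bigl(A(n)-B(n)\bigr)^2=o(N)$, whence this error is $o(N^{3/2})\cdot o(N^{1/2})=o(N^2)$ as required. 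Correspondingly, the lower bound must then be taken for the symmetrized series $F+G$ (Horv\'ath's $J\gg mN^{3/2}$ for $(F'+G')(F+G)$, which works because all Fourier coefficients involved are nonnegative), not for $F$ alone. Without this symmetrization your error analysis cannot close for any $\alpha\in[0,1/4]$.
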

Note that $a_i-b_i=o(a_i^{\frac12-\alpha})$ implies $A(n)-B(n)=o(n^{\frac12-\alpha})$.
Hence setting $\alpha=1/4$, we get
\begin{corollary}
$$
R_{A,B}(n)=cn+o(n^{\frac{1}{4}}),\qquad n\to\infty
$$
can't hold for any constant $c>0$, under the unique assumption
$$
a_i-b_i=o(a_i^{\frac14}).
$$
\end{corollary}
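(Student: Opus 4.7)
Argue by contradiction: assume there is a constant $c>0$ with $R_{A,B}(n)=cn+o(n^{1/4})$. Introduce the generating functions
\[
F(z)=\sum_{a\in A}z^a,\qquad G(z)=\sum_{b\in B}z^b,
\]
so that $\sum_n R_{A,B}(n)z^n = F(z)G(z)/(1-z)$. The contradiction hypothesis translates into the identity
\[
F(z)G(z)=\frac{cz}{1-z}+(1-z)E(z),\qquad E(z)=\sum_n e(n)z^n,\ \ |e(n)|=o(n^{1/4}).
\]
Since $R_{A,B}(2n)\ge A(n)B(n)$, the hypothesis together with (2) forces $A(n),B(n)\ll n^{1/2}$.

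The plan is to deduce the analogue $R_{A,A}(n)=cn+o(n^{1/4})$, which is ruled out by the Jurkat--Montgomery--Vaughan strengthening \eqref{JMV} of the Erd\H os--Fuchs theorem. In view of the identity $F^2=FG+F(F-G)$, it suffices to prove
\[
R_{A,A}(n)-R_{A,B}(n)=\sum_{i}\bigl(A(n-a_i)-A(n-b_i)\bigr)=o(n^{1/4}).
\]
To estimate this, I would bound $|F(z)-G(z)|$ on a circle $|z|=r$ with $r=1-1/N$ in two complementary ways: Abel summation plus (2) yields
\[
|F(z)-G(z)|=\Bigl|(1-z)\sum_{n\ge 1}(A(n)-B(n))z^n\Bigr|\ll|1-z|\,(1-r)^{-\alpha-1},
\]
while the expansion $F-G=\sum_i(z^{a_i}-z^{b_i})$ together with the elementary inequality $|z^a-z^b|\le|a-b|\,|\log z|\,r^{\min(a,b)}$ and hypothesis (1) yields, thanks to $A(n)\ll n^{1/2}$,
\[
|F(z)-G(z)|\ll o(1)\cdot|\log z|\,(1-r)^{\alpha-1}.
\]
One now recovers $R_{A,A}(n)-R_{A,B}(n)$ as the $n$-th coefficient of $F(z)(F(z)-G(z))/(1-z)$ via a Cauchy integral on $|z|=r$, using the trivial bound $|F(re^{i\theta})|\le F(r)\ll N^{1/2}$, and splits the contour into an arc near $\theta=0$ (where the second bound is sharper) and the complementary arc (where the first bound suffices).

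The main obstacle is the bookkeeping of these two estimates. The factor $(1-r)^{-\alpha-1}$ from (2) and the factor $(1-r)^{\alpha-1}$ from (1) must be balanced against the size $F(r)\ll N^{1/2}$ and against the factor $1/|1-z|$ in the Cauchy kernel; one must choose the radius $r=1-1/N$ and the length of the central arc so that both contributions sum to $o(N^{1/4})$. The constraint $\alpha\le 1/4$ is exactly the range in which such a choice is possible, with the endpoint $\alpha=1/4$ (the Corollary) being the case where (1) alone already implies (2) and the two bounds collapse into a single one.
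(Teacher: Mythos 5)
The paper's own proof of this Corollary is a one-line specialization: when $\alpha=1/4$, hypothesis (1) of Theorem \ref{mainT1} already forces $A(n)-B(n)=o(n^{1/4})$, so hypothesis (2) is automatic and Theorem \ref{mainT1} applies verbatim. Your proposal instead attempts a self-contained argument, and that argument has a genuine gap at its central step: the reduction ``it suffices to prove $R_{A,A}(n)-R_{A,B}(n)=o(n^{1/4})$ and then invoke (\ref{JMV})''. This pointwise estimate does not follow from the hypotheses. Without cancellation, all they give is
$$
|R_{A,A}(n)-R_{A,B}(n)|\le\sum_{\substack{a'\in A\\ a'\le n}}\big|A(n-a')-B(n-a')\big|=\sum_{\substack{a'\in A\\ a'\le n}}o\big((n-a')^{1/4}\big)=o(n^{3/4}),
$$
a factor of about $n^{1/2}$ too large, and nothing in the hypotheses supplies the required cancellation.

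Your proposed route to the pointwise bound also cannot close this gap. Extracting $R_{A,A}(n)-R_{A,B}(n)$ as the $n$-th coefficient of $F(z)(F(z)-G(z))/(1-z)$ and bounding the Cauchy integral by absolute values is structurally too lossy: since $\int_0^1|F(z)|\,d\theta\ll N^{1/4}$ on $|z|=1-1/N$ (Cauchy--Schwarz and Parseval with $A(n)\ll n^{1/2}$), one would need $|F(z)-G(z)|=o(|1-z|)$ uniformly on the circle to reach $o(N^{1/4})$, which essentially forces $A=B$; your two bounds on $|F-G|$, combined optimally, leave the coefficient estimate off by at least a power $N^{3/4}$. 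The paper avoids this entirely: it never compares $R_{A,A}$ and $R_{A,B}$ pointwise, but writes $(F+G)^2/4=FG+(F-G)^2/4$ and shows that the $(F-G)^2$ correction contributes only $J_4=o(N^2)$ to the Erd\H os--Fuchs integral inequality, i.e.\ is negligible in an $L^2$/Parseval sense against the main term $J\gg mN^{3/2}$ with $m\asymp N^{1/2}$; the needed mean-square bounds on $A(n)-B(n)$ and on $\sum_{a\le n}a-\sum_{b\le n}b$ are exactly Lemma \ref{NABdiffT}. Your estimates on $|F-G|$ are in the right spirit, but they must be fed into the integral inequality in mean square rather than used to recover a single power-series coefficient.
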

We also can consider the generalizations of the Erd\H os-Fuchs theorem for the sums of more than two subsets of $\N$. Suppose that $A_1,A_2,\ldots,A_k$ are non-empty subsets of $\N$. Define
$$
r_{A_1,\ldots,A_k}(n)=|\{(a_1,a_2,\ldots,a_k):\,a_1+\cdots+a_k=n,\ a_1\in A_1,\ldots,a_k\in A_k\}|,
$$
and define
$$
R_{A_1,\ldots,A_k}(n)=\sum_{j\leq n}r_{A_1,\ldots,A_k}(j).
$$
Horv\'ath \cite{Ho01,Ho02} proved that for any $A\subseteq\N$,
\begin{equation}
\underbrace{R_{A,\ldots,A}}_{k\ A\text{'s}}(n)=cn+o(n^{\frac14}(\log n)^{-\frac12})
\end{equation}
can't hold for any constant $c>0$. Subsequently, Tang \cite{Ta09} obtained an extension of (\ref{JMV}) for the sum of $k$ $A$'s, i.e., it is impossible that
\begin{equation}\label{TangkA}
\underbrace{R_{A,\ldots,A}}_{k\ \text{times}}(n)=cn+o(n^{\frac14}).
\end{equation}
Chen and Tang also proved a quantitative version of (\ref{TangkA}) in \cite{CT11}.

In \cite{Ho01,Ho02}, Horv\'ath factly considered $R_{A_1,\ldots,A_k}(n)$. Assume that
$A_1=\{a_{1,1},a_{1,2},\ldots\}$ and $A_2=\{a_{1,1},a_{1,2},\ldots\}$. Suppose that
\begin{align}
&a_{1,i}-a_{2,i}=o\big(a_{1,i}^{\frac12}(\log a_{1,i})^{-\frac k2}\big),\qquad i\to+\infty,\tag{h1}\\
&\label{AjnA1n}A_j(n)=\Theta\big(A_1(n)\big),\qquad j=3,\ldots,k\tag{h2}
\end{align}
for any sufficiently large $n$,
where $f=\Theta(g)$ means $g\ll f\ll g$, i.e., there exist two constants $c_1,c_2>0$ such that $c_1g(n)\leq f(n)\leq c_2g(n)$ for any sufficiently large $n$. Then
Horv\'ath \cite{Ho02} showed that for any constant $c>0$,
\begin{equation}
R_{A_1,\ldots,A_k}(n)=cn+o(n^{\frac14}(\log n)^{1-\frac{3k}4}),\qquad n\to+\infty,
\end{equation}
is impossible. Under some additional assumptions, Tang \cite{Ta14} improved  Horv\'ath's result and showed that the remainder term can be reduced to $o(n^{\frac14}(\log n)^{-\frac{1}2})$ or $o(n^{\frac14}(\log n)^{-\frac{k+1}{2(k-1)}})$ according to whether $k$ is even or odd.

Here we shall give an extension of (\ref{JMV}) concerning $R_{A_1,\ldots,A_k}(n)$.
\begin{theorem}\label{mainT2}
Suppose that $0<\beta\leq 1/2$ and $0\leq\alpha\leq\beta/2$. Let
$A_1,A_2,\ldots,A_k$ be some non-empty subsets of $\N$ satisfying that\medskip

\noindent(1) $a_{1,i}-a_{2,i}=o(a_{1,i}^{\beta-\alpha})$
as $i\to+\infty$, where $a_{1,i}$ (resp. $a_{2,i}$) is the $i$-th elements of $A_1$ (resp. $A_2$);\medskip

\noindent(2) $A_1(n)-A_2(n)=O(n^{\alpha})$
for each $n\in\N$; \medskip

\noindent(3) $A_1(n)=\Theta(n^\beta)$. \medskip

\noindent Then
\begin{equation}\label{EFAk}
R_{A_1,\ldots,A_k}(n)=cn+o(n^{\frac{1}{4}}),\qquad n\to\infty
\end{equation}
can not hold for any constant $c>0$.
\end{theorem}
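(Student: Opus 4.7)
Assume for contradiction that $R_{A_1,\ldots,A_k}(n)=cn+e(n)$ with $e(n)=o(n^{1/4})$, and introduce
\[
F_j(z):=\sum_{a\in A_j}z^a,\qquad G(z):=\prod_{j=1}^k F_j(z),\qquad E(z):=\sum_{n\ge 0} e(n)z^n.
\]
Since $G(z)/(1-z)=\sum_n R_{A_1,\ldots,A_k}(n)z^n$, comparison with the expansion of $cz/(1-z)^2+E(z)$ turns the hypothesis into the functional identity
\[
G(z)=\frac{cz}{1-z}+(1-z)E(z).
\]

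Following the scheme of Erd\H os--Fuchs, Jurkat, Montgomery--Vaughan, and its $k$-fold extensions by Horv\'ath, Tang, and Chen--Tang, I would fix $r=1-1/N$ with $N\to\infty$ and analyze a weighted $L^2$-integral of $G$ on the circle $|z|=r$, tested against a Ces\`aro-- or Fej\'er--type kernel $K_N$ concentrated on scale $|1-z|\asymp 1/N$. Plugging in the identity and using Parseval together with
\[
\|E\|_2^2=\sum_n e(n)^2 r^{2n}=o(N^{3/2}),
\]
one absorbs the $E$--contribution by Cauchy--Schwarz and the cross-term by an Abel-summation argument, obtaining a sharp upper bound for this integral with main term supplied by the pole of $cz/(1-z)$ at $z=1$. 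A matching lower bound comes from Cauchy--Schwarz against the ``easy'' evaluation $G(r)\sim cN$, with $K_N$ tuned as in Montgomery--Vaughan so that the constants are sharp enough to defeat the $o(N^{3/2})$ error. In the absence of any further input on the $A_j$'s this already recovers the classical Erd\H os--Fuchs-type contradiction, cf.\ \eqref{JMV}.

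To adapt this scheme to \emph{different} sets $A_1,A_2$, the mixed factor $F_1F_2$ inside the integrals must be reduced to the diagonal $F_1^2$. Writing
\[
F_2(z)-F_1(z)=\sum_i\big(z^{a_{2,i}}-z^{a_{1,i}}\big)=(1-z)\sum_n\big(A_2(n)-A_1(n)\big)z^n,
\]
hypothesis (2) yields $|F_2(z)-F_1(z)|\ll|1-z|\,N^{\alpha+1}$ on $|z|=r$, while hypothesis (1) combined with the elementary bound $|z^a-z^b|\le|a-b|\,|1-z|$ (valid for $|z|\le 1$) and the density $A_1(n)=\Theta(n^\beta)$ from hypothesis~(3) provides
\[
|F_2(z)-F_1(z)|\ll|1-z|\cdot o\Big(\sum_{a\in A_1} a^{\beta-\alpha}\,r^a\Big)=|1-z|\cdot o(N^{2\beta-\alpha}).
\]
In the admissible range $0<\beta\le 1/2$, $0\le\alpha\le\beta/2$ these two bounds cooperate: on the peak $|1-z|\lesssim 1/N$ the second estimate (with its extra $|1-z|$ saving) is decisive, whereas in the bulk the first takes over. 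Their combination justifies the swap $F_2\rightsquigarrow F_1$ inside the weighted $L^2$-integral with error strictly of smaller order than the main term. The remaining factors $F_j$ for $j\ge 3$ are controlled by the pointwise bound $|F_j(z)|\le F_j(r)$ together with the constraint $\prod_{j=3}^k F_j(r)\asymp N^{1-2\beta}$ forced by $G(r)\sim cN$ and $F_1(r),F_2(r)\asymp N^\beta$ (the latter following from (3) and the fact that (2) implies $A_2(n)=\Theta(n^\beta)$ too).

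The main obstacle is the uniform control of the swap error over the whole circle and its coordination with the Fej\'er--type smoothing: near $\theta=0$ the pole $|1-z|^{-2}$ of $|G|^2$ is most visible and the more delicate bound on $F_2-F_1$ must be used, while far from $\theta=0$ the cruder bound from hypothesis~(2) is enough. The precise numerology $\alpha\le\beta/2$ and $\beta\le 1/2$ is exactly what makes these two estimates glue together and beat the $o(N^{3/2})$ tolerance from the identity, closing the contradiction.
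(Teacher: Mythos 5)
Your general skeleton (generating functions on $|z|=1-1/N$, a Fej\'er kernel $\big|\frac{1-z^m}{1-z}\big|^2$, reducing the mixed product $F_1F_2$ to a square plus an error involving $F_1-F_2$, and controlling $\prod_{j\ge3}F_j(\rho)\asymp N^{1-2\beta}$ via $G(\rho)\sim cN$) matches the paper's. But the central step --- disposing of the $F_1-F_2$ error --- is carried out with pointwise ($L^\infty$) bounds, and these are quantitatively insufficient in the entire admissible range $\alpha\le\beta/2$. First, your two bounds do not actually ``cooperate'': both have the form $|1-z|\cdot N^{\gamma}$ with $\gamma=1+\alpha$ from hypothesis (2) and $\gamma=2\beta-\alpha$ from (1)+(3), and since $\beta\le 1/2$ one has $2\beta-\alpha\le 1+\alpha$, so the second bound dominates everywhere on the circle; hypothesis (2) contributes nothing in your scheme. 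Second, and fatally, the error term one must beat is (in the paper's normalization, $\beta=1/2$, $m\asymp N^{1/2}$) of the shape $\int_0^1 \big|\tfrac{F_1'-F_2'}{1-z}\big|\cdot\big|\tfrac{F_1-F_2}{1-z}\big|\,d\theta=o(N^2)$. The sup-norm bound gives $\big\|\tfrac{F_1-F_2}{1-z}\big\|_\infty=o(N^{1-\alpha})$, and even if you pair it with the best conceivable $L^2$ control $o(N^{3/2})$ on the other factor you only reach $o(N^{5/2-\alpha})$, which for $\alpha\le 1/4$ is far above $N^2$. What the paper uses instead is a genuine Parseval/$L^2$ estimate (Lemma \ref{NABdiffT}): with $\lambda(n)=|A_1(n)-A_2(n)|$ one has $\sum_{n\le x}\lambda(n)^2\le\|\lambda\|_\infty\sum_{n\le x}\lambda(n)\ll x^\alpha\cdot o(x^{2\beta-\alpha})=o(x^{2\beta})$, where the sup comes from hypothesis (2) and the $L^1$ norm from (1)+(3); this $L^\infty\times L^1\Rightarrow L^2$ interpolation is exactly the place where the two hypotheses interact, and it yields $\int\big|\tfrac{F_1-F_2}{1-z}\big|^2=o(N^{2\beta})$ and the companion estimate for $\sum_{a\le n}a-\sum_{b\le n}b$, giving $o(N^{1+\beta})\cdot o(N^{\beta})\cdot N^{1-2\beta}=o(N^2)$. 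Your proposal never formulates these $L^2$ bounds, and without them the swap $F_2\rightsquigarrow F_1$ cannot be justified.

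Two further points. (i) You invoke ``Montgomery--Vaughan tuning'' of the kernel to get the full $o(n^{1/4})$ rather than $o(n^{1/4}(\log n)^{-1/2})$; the paper instead differentiates the identity $\big(F_1+F_2\big)^2\prod_{j\ge3}F_j=\frac{4cz}{1-z}+4(1-z)\sum\vartheta(n)z^n+\big(F_1-F_2\big)^2\prod_{j\ge3}F_j$ in $z$ before integrating (Horv\'ath's device), which changes the main term to $\frac{c}{(1-z)^2}$ and the target lower bound to $J\gg m^{2-2\beta}N^{1+\beta}$; adapting Montgomery--Vaughan's argument to $k$ unequal summands is not automatic and you give no details. (ii) The lower bound for $J$ in the differentiated setting is not a one-line Cauchy--Schwarz against $G(\rho)\sim cN$: it requires positivity of the coefficients of $\big(F_1'+F_2'\big)\prod_{j\ge3}F_j$, a restriction of the resulting quadruple sum, and the auxiliary fact $R_{A_3,\ldots,A_k}(n)=\Theta(n^{1-2\beta})$ (Lemma \ref{A3AkL}). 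These pieces are absent from the proposal.
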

Clearly the assumptions (2) and (3) of Theorem \ref{mainT2} also imply $A_2(n)=\Theta(n^\beta)$. Furthermore, if $A_j(n)=\Theta\big(A_1(n)\big)$ for $j=2,\ldots,k$ and $R_{A_1,\ldots,A_k}(n)=\Theta(n)$, then it is easy to verify that 
$A_1(n)=\Theta(n^\frac{1}{k})$.
Hence (2) and (3) of Theorem \ref{mainT2} are valid under Horv\'ath's assumption (\ref{AjnA1n}).

In \cite{Ba77}, Bateman showed that
\begin{equation}\label{Bateman}
\sum_{j\leq n}\big(R_{A,A}(j)-cn\big)^2=o\big(n^{\frac32}(\log n)^{-1}\big),\qquad n\to+\infty
\end{equation} 
can't hold for any constant $c>0$. Clearly the result of Bateman implies the Erd\H os-Fuchs theorem.
In \cite{CT12}, Chen and Tang showed that for any constant $c>0$, it is impossible that
\begin{equation}\label{CT}
\sum_{j\leq n}\big(\underbrace{R_{A,\ldots,A}}_{k\ \text{times}}(j)-cn\big)^2=o\big(n^{\frac32}\big),\qquad n\to+\infty.
\end{equation} 

Now we can prove that
\begin{theorem}\label{mainT3}
(i) Under the assumptions of Theorem \ref{mainT1}, 
\begin{equation}\label{BatemanAB}
\sum_{j\leq n}\big(R_{A,B}(j)-cn\big)^2=o(n^{\frac{3}{2}}),\qquad n\to\infty
\end{equation}
can not hold for any constant $c>0$.\medskip

\noindent(ii) Under the assumptions of Theorem \ref{mainT2},
\begin{equation}\label{CTA1Ak}
\sum_{j\leq n}\big(R_{A_1,\ldots,A_k}(j)-cn\big)^2=o(n^{\frac{3}{2}}),\qquad n\to\infty
\end{equation}
can not hold for any constant $c>0$.
\end{theorem}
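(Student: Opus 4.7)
The plan is to reduce to the generating-function machinery underlying Theorems \ref{mainT1} and \ref{mainT2}, feeding it the $L^2$ hypothesis via an Abel-summation step that yields the same integral estimate those proofs use.

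Assume toward a contradiction that (\ref{BatemanAB}) in case (i) (resp.~(\ref{CTA1Ak}) in case (ii)) holds for some $c>0$. Set
\[
E(n):=R_{A,B}(n)-cn\qquad\text{(resp.\ }E(n):=R_{A_1,\ldots,A_k}(n)-cn\text{)},
\]
and write $F_A(z):=\sum_{a\in A}z^a$, $F_B(z):=\sum_{b\in B}z^b$, and analogously $F_{A_j}(z)$ for $j=1,\ldots,k$. The generating function $G(z):=\sum_{n\ge 0}E(n)z^n$ equals $F_A(z)F_B(z)/(1-z)-cz/(1-z)^2$ in case (i), and the analogous expression with $F_{A_1}(z)\cdots F_{A_k}(z)$ in the numerator in case (ii). Parseval's identity on the circle $|z|=r\in(0,1)$ gives
\[
\int_0^1\big|G(re^{2\pi i\theta})\big|^2\,d\theta=\sum_{n\ge 0}E(n)^2\,r^{2n},
\]
and an Abel-summation argument using the hypothesis $\sum_{j\le n}E(j)^2=o(n^{3/2})$ together with $\sum_{n\ge 0}n^{3/2}r^{2n}\asymp(1-r)^{-5/2}$ upgrades this to
\[
\int_0^1\big|G(re^{2\pi i\theta})\big|^2\,d\theta=o\big((1-r)^{-3/2}\big)\qquad(r\to 1^-).
\]

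This is precisely the integral estimate that the proofs of Theorems \ref{mainT1} and \ref{mainT2} extract from the pointwise hypothesis $E(n)=o(n^{1/4})$ via the same application of Parseval. Multiplication by $1-z$ converts it into an $L^2$-bound for $F_A(z)F_B(z)-cz/(1-z)$ (resp.~$F_{A_1}(z)\cdots F_{A_k}(z)-cz/(1-z)$) on the circle $|z|=r$. From this $L^2$-bound, hypotheses (1) and (2) of Theorem \ref{mainT1} (resp.~Theorem \ref{mainT2}) are used to replace $F_A(z)F_B(z)$ by $F_A(z)^2$ (resp.~$F_{A_1}(z)F_{A_2}(z)$ by $F_{A_1}(z)^2$) with admissible error, hypothesis (3) of Theorem \ref{mainT2} further controls the auxiliary factor $F_{A_3}(z)\cdots F_{A_k}(z)$, and a contradiction is then derived by comparison with a lower bound for an $L^2$ norm such as $\int_0^1|F_A(re^{2\pi i\theta})|^4\,d\theta$ (or its $2k$-th power analogue), which follows from the density information contained in the same hypotheses.

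The main obstacle is bookkeeping rather than new analysis: one must verify that the arguments underlying Theorems \ref{mainT1} and \ref{mainT2} depend on the pointwise hypothesis $E(n)=o(n^{1/4})$ only through the derived integral bound $\int|G|^2\,d\theta=o((1-r)^{-3/2})$, rather than through further pointwise consequences. Once this has been confirmed, both parts of Theorem \ref{mainT3} follow from the same machinery with no further analytic input beyond the Abel-summation step described above.
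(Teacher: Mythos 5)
Your proposal is correct and follows essentially the same route as the paper: the paper likewise reuses the decomposition $J\leq J_1+J_2+J_3+J_4$ from the proofs of Theorems \ref{mainT1} and \ref{mainT2} and re-derives only the bounds on $J_2$ and $J_3$ from the averaged hypothesis $\sum_{j\leq n}\vartheta(j)^2=o(n^{3/2})$ via Cauchy--Schwarz, Parseval, and an Abel/Stieltjes summation, exactly as you sketch. The one caution is that your claim that the pointwise hypothesis enters \emph{only} through $\int_0^1|G|^2\,d\theta=o\big((1-r)^{-3/2}\big)$ is slightly too strong: the paper also needs the weighted moment $\sum_n n^2\vartheta(n)^2\rho^{2n}=o(N^{7/2})$ for the derivative term in $J_3$, and the pointwise consequence $\vartheta(n)=o(n^{3/4})$ (immediate from the $L^2$ hypothesis), which keeps $R_{A,B}(n)=\Theta(n)$ and thereby preserves the lower bound $J\gg mN^{3/2}$ and the estimates for $J_1$ and $J_4$ --- both items follow from the same hypothesis by the same partial summation, so your plan goes through.
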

In Section 2, we shall establish an auxiliary lemma and use it to conclude the proof of Theorem \ref{mainT1}. This lemma is also necessary to the proof of Theorem \ref{mainT2}, which will be given in Section 3. Finally, we shall prove Theorem \ref{mainT3} in Section 4.

\section{An auxiliary lemma and the proof of Theorem \ref{mainT1}}
\setcounter{lemma}{0}\setcounter{theorem}{0}\setcounter{proposition}{0}\setcounter{corollary}{0}
\setcounter{equation}{0}

\begin{lemma}\label{NABdiffT}
Suppose that $0<\beta\leq 1/2$ and $0\leq\alpha\leq\beta/2$. Let
$A=\{a_1,a_2,\ldots\}$ and $B=\{b_1,b_2,\ldots\}$ be two infinite subsets of $\N$ satisfying that\medskip

\noindent(1) $a_i-b_i=o(a_i^{\beta-\alpha})$
for each $i\geq 1$;\medskip

\noindent(2) $A(n)-B(n)=O(n^{\alpha})$
for each $n\in\N$; \medskip

\noindent(3) $A(n),B(n)\leq cn^\beta$
for each $n\in\N$, where $c>0$ is a constant. \medskip

\noindent Then as $N\to+\infty$, we have
\begin{equation}\label{NABabdiff}
\sum_{n=0}^\infty \bigg(1-\frac1N\bigg)^{2n}\cdot\bigg(\sum_{\substack{a\in A\\ a\leq n}}a-\sum_{\substack{b\in B\\ b\leq n}}b\bigg)^2=o(N^{2+2\beta}),
\end{equation}
and
\begin{equation}\label{NABdiff}
\sum_{n=0}^\infty \bigg(1-\frac1N\bigg)^{2n}\cdot\big(A(n)-B(n)\big)^2=o(N^{2\beta}).
\end{equation}
\end{lemma}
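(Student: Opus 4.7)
The plan is to index everything by the natural pairing $i\mapsto(a_i,b_i)$ and swap orders of summation against the weight $r^{2n}$, where $r:=1-1/N$. Set $\phi(n):=A(n)-B(n)$, and for each $i$ let $I_i:=\{n\in\N:\min(a_i,b_i)\le n<\max(a_i,b_i)\}$, which has length $|a_i-b_i|$; then $\phi(n)=\sum_i d_i(n)$, where $d_i(n):=\mathbf{1}[a_i\le n]-\mathbf{1}[b_i\le n]$ is supported on $I_i$ and takes values in $\{-1,0,1\}$. Setting $K(n):=\sum_i\mathbf{1}[n\in I_i]$, one has $|\phi(n)|\le K(n)$, and combining with assumption (2) gives the key pointwise inequality
\[
\phi(n)^2 \le |\phi(n)|\,K(n) \le C\,n^\alpha K(n),
\]
which simultaneously uses (1) (implicit in the structure of $K$) and (2).

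To prove (\ref{NABdiff}) I would swap the sums,
\[
\sum_n r^{2n}\phi(n)^2 \le C\sum_i\sum_{n\in I_i} r^{2n} n^\alpha \ll \sum_i a_i^\alpha|a_i-b_i|\,r^{2a_i},
\]
using $n\le 2a_i$ on $I_i$ and the uniform bound $r^{2\min(a_i,b_i)}=O(r^{2a_i})$ (valid because $|a_i-b_i|/N=o(N^{\beta-\alpha-1})=o(1)$ in the effective range $a_i\lesssim N$, while $r^{2a_i}$ decays exponentially for $a_i\gg N$). Then I would apply (1) in its $\epsilon$-form --- for every $\epsilon>0$, $|a_i-b_i|<\epsilon\,a_i^{\beta-\alpha}$ whenever $i\ge I(\epsilon)$ --- to bound the tail of the $i$-sum by $\epsilon\sum_i a_i^\beta r^{2a_i}$. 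A Stieltjes integration by parts against $A(n)\le c\,n^\beta$ yields $\sum_i a_i^\beta r^{2a_i}=O(N^{2\beta})$, and letting $\epsilon\to 0$ finishes (\ref{NABdiff}).

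For (\ref{NABabdiff}) I would decompose $T(n):=S_A(n)-S_B(n)=T_1(n)+T_2(n)$, where $T_1(n):=\sum_{i\le m_n}(a_i-b_i)$ with $m_n:=\min(A(n),B(n))$ is the paired part and $T_2(n)$ collects the $|\phi(n)|$ indices with exactly one of $a_i,b_i\le n$. For such an index, the element that is $\le n$ (say $a_i$) is squeezed into $(n-o(n^{\beta-\alpha}),n]$ because $b_i>n$ and $|a_i-b_i|=o(a_i^{\beta-\alpha})$; writing $a_i=n-\delta_i$ with $0\le\delta_i<o(n^{\beta-\alpha})$ gives
\[
T_2(n)=n\phi(n)+E(n),\qquad |E(n)|\le|\phi(n)|\cdot o(n^{\beta-\alpha})=o(n^\beta)
\]
by (2), while $|T_1(n)|=o(n^{2\beta-\alpha})$ by (1) and (3). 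Then $T(n)^2\le 3\bigl(T_1(n)^2+n^2\phi(n)^2+E(n)^2\bigr)$; summing against $r^{2n}$, the first piece yields $o(N^{4\beta-2\alpha+1})\le o(N^{2+2\beta})$ (using $\beta-\alpha\le 1/2$), the third gives $o(N^{2\beta+1})\le o(N^{2+2\beta})$, and the middle reuses the argument for (\ref{NABdiff}) with an extra factor $n^2$ inside the swap, producing $o\bigl(\sum_i a_i^{2+\beta}r^{2a_i}\bigr)=o(N^{2+2\beta})$.

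The hard part will be making rigorous the transition from the pointwise hypothesis $|a_i-b_i|=o(a_i^{\beta-\alpha})$ to uniform estimates on the weighted $i$-sums; this I would handle by the standard device of isolating a finite initial segment $i<I(\epsilon)$ (whose contribution is $O_\epsilon(1)$) and bounding the tail by $\epsilon$ times an Abel/Stieltjes estimate, then letting $\epsilon\to 0$. A secondary nuisance is the uniform bound $r^{2\min(a_i,b_i)}=O(r^{2a_i})$, which rests on $\beta-\alpha<1$ (automatic from $\beta\le 1/2$ and $\alpha\ge 0$).
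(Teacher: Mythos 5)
Your proof is correct and follows essentially the same route as the paper: the same interval decomposition $I_i$, the same counting function $K(n)=|A(n)-B(n)|$ of multiplicities, the same $L^1$-to-$L^2$ upgrade $\phi(n)^2\le Cn^{\alpha}K(n)$ with $\sum_n K(n)$ controlled by $\sum_i|a_i-b_i|$, and the same split of $S_A(n)-S_B(n)$ into a paired part of size $o(n^{2\beta-\alpha})$ and a boundary part of size $n\,\phi(n)+o(n^{\beta})$. The only cosmetic difference is that you swap the order of summation directly against the weight $r^{2n}$ and Abel-sum over $A$, whereas the paper first establishes the sharp-cutoff estimates $\sum_{n\le x}(\cdot)^2=o(x^{2+2\beta})$ and $o(x^{2\beta})$ and then transfers them to the weighted sums by a Stieltjes integration by parts; both rest on the same inequalities, and the technical points you flag (the $\epsilon$--$I(\epsilon)$ splitting and the harmless replacement of $r^{2\min(a_i,b_i)}$ by, say, $r^{a_i}$) are handled by exactly that kind of routine argument.
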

\begin{proof}
For each $j\geq 1$, let the interval
$$
\cI_j=[\min\{a_{j},b_{j}\},\max\{a_{j},b_{j}\}-1].
$$
Evidently $|\cI_j|=o({a_{j}}^{\beta-\alpha})$.
Define
$$
\lambda(n)=|\{j:\, n\in\cI_j\}|.
$$
Note that $n\in\cI_j$ if and only if either $a_j\leq n<b_j$ or 
$b_j\leq n<a_j$.
Hence
$$
\lambda(n)=|A(n)-B(n)|=O(n^\alpha).
$$
Thus
\begin{align*}
\bigg|\sum_{\substack{a\in A\\ a\leq n}}a-\sum_{\substack{b\in B\\ b\leq n}}b\bigg|
\leq&\sum_{\substack{j\geq 1\\ n\in\cI_j}}\min\{a_j,b_j\}+\sum_{\substack{j\geq 1\\ a_j,b_j\leq n}}|a_j-b_j|\\
\leq&\lambda(n)\cdot n+A(n)\cdot o(n^{\beta-\alpha})
=\lambda(n)\cdot n+o(n^{2\beta-\alpha}),
\end{align*}
where in the last step we used the assumption $A(n)\ll n^\beta$.
It follows that
\begin{align*}
\sum_{n\leq x}\bigg(\sum_{\substack{a\in A\\ a\leq n}}a-\sum_{\substack{b\in B\\ b\leq n}}b\bigg)^2\ll\sum_{n\leq x}\big(\lambda(n)^2n^2+o(n^{4\beta-2\alpha})\big)\ll
x^{2}\sum_{n\leq x}\lambda(n)^2+o(x^{1+4\beta-2\alpha})
\end{align*}
for any sufficiently large $x$.
Define
$$
\fJ(x)=\max\{j\geq 1:\,a_j\leq x\text{ or }b_j\leq x\}.
$$
Clearly $\fJ(x)\ll x^\beta$. We have
\begin{align*}
&\sum_{n\leq x}\lambda(n)^2\ll
x^\alpha\sum_{n\leq x}\lambda(n)\leq 
x^\alpha\sum_{j\leq \fJ(x)}|\cI_j|=x^\alpha\cdot x^\beta\cdot o(x^{\beta-\alpha})=o(x^{2\beta}),
\end{align*}
as $x\to+\infty$, i.e.,
$$
\sum_{n\leq x}\bigg(\sum_{\substack{a\in A\\ a\leq n}}a-\sum_{\substack{b\in B\\ b\leq n}}b\bigg)^2=o(x^{2+2\beta}).
$$

Now for any $\epsilon>0$, there exists $x_0=x_0(\epsilon)>0$ such that if $x\geq x_0$, then
$$
\sum_{n\leq x}\bigg(\sum_{\substack{a\in A\\ a\leq n}}a-\sum_{\substack{b\in B\\ b\leq n}}b\bigg)^2\leq\epsilon x^{2+2\beta}.
$$
Define
$$
\psi(x)=\sum_{n\leq x}\bigg(\sum_{\substack{a\in A\\ a\leq n}}a-\sum_{\substack{b\in B\\ b\leq n}}b\bigg)^2.
$$
Trivially, $\psi(x)=\sum_{n<x}(n\cdot n)^2\leq x^5$
for any $x\geq 0$.
Let $\rho=1-1/N$. 
Applying the Stieltjes integral, we get
\begin{align*}
\sum_{n=0}^\infty\rho^{2n}\bigg(\sum_{\substack{a\in A\\ a\leq n}}a-\sum_{\substack{b\in B\\ b\leq n}}b\bigg)^2
=&\int_0^\infty\rho^{2x} d\psi(x)\\
=&
\rho^{2x}\psi(x)\big|_{x=0}^{+\infty}-2\int_0^{+\infty}\psi(x)\cdot\rho^{2x}\log\rho dx.
\end{align*}
Since $0<\rho<1$,
$$
\lim_{x\to+\infty}\rho^{2x}\psi(x)\leq \lim_{x\to+\infty}\rho^{2x}x^{2+2\beta}=0.
$$
Let $\eta=-\log\rho$. Clearly $1/N\leq \eta\leq 2/N$. Then
\begin{align*}
&-\int_0^{+\infty}\psi(x)\cdot\rho^{2x}\log\rho dx=
\eta\int_0^{+\infty}\psi(x)\cdot e^{-2\eta x} dx\\
=& 
\eta\int_{x_0}^{+\infty}\psi(x)\cdot e^{-2\eta x} dx+
\eta\int_{0}^{x_0}\psi(x)\cdot e^{-2\eta x} dx\\
\leq&
\eta\int_{0}^{+\infty}\epsilon x^{2+2\beta}\cdot e^{-2\eta x} dx+
\eta \int_{0}^{x_0} x^5dx\\
=&\frac{\Gamma(3+2\beta)\cdot\epsilon}{2^{3+2\beta}\eta^{2+2\beta}}+\frac{\eta x_0^6}{6}\leq
\frac{\cdot\Gamma(3+2\beta)\cdot \epsilon N^{2+2\beta}}{2^{3+2\beta}}+\frac{x_0^6}{3N},
\end{align*}
where $\Gamma$ is the Gamma function.
If $N\geq \epsilon^{-1}x_0^6$, we can get
$$
\sum_{n=0}^\infty\rho^{2n}\bigg(\sum_{\substack{a\in A\\ a\leq n}}a-\sum_{\substack{b\in B\\ b\leq n}}b\bigg)^2
\leq 4\epsilon N^{2+2\beta}.
$$
Since $\epsilon>0$ can be arbitrarily small, we get (\ref{NABabdiff}).

Similarly, we have
$$
\sum_{n\leq x}\big(A(n)-B(n)\big)^2=
\sum_{n\leq x}\lambda(n)^2\ll x^{\alpha}\sum_{n\leq x}\lambda(n)\leq 
x^\alpha\sum_{j\leq \fJ(x)}|\cI_j|=o(x^{2\beta}),
$$
as $x\to+\infty$. For any $\epsilon>0$, there exists $x_0=x_0(\epsilon)>0$ such that for any $x>x_0$,
$$
\sum_{n\leq x}(A(n)-B(n))^2\leq \epsilon x^{2\beta}.
$$
Define
$$
\phi(x)=\sum_{n\leq x}(A(n)-B(n))^2.
$$
We also have
\begin{align*}
\sum_{n=0}^\infty\rho^{2n}\big(A(n)-B(n)\big)^2
=&
\rho^{2x}\phi(x)\big|_{x=0}^{+\infty}-2\int_0^{+\infty}\phi(x)\cdot\rho^{2x}\log\rho dx\\
=& 
2\eta\int_{x_0}^{+\infty}\phi(x)\cdot e^{-2\eta x} dx+
2\eta\int_{0}^{x_0}\phi(x)\cdot e^{-2\eta x} dx\\
\leq&
2\eta\int_{0}^{+\infty}\epsilon x^{2\beta}\cdot e^{-2\eta x} dx+
2\eta \int_{0}^{x_0} x^3dx\\
\leq&\frac{\Gamma(1+2\beta)\cdot \epsilon N^{2\beta}}{2^{2\beta}}+\frac{x_0^4}{N}\leq 4\epsilon N^{\beta}
\end{align*}
provided that $N\geq\epsilon^{-1}x_0^4$.
Thus  (\ref{NABdiff}) is concluded, too.
\end{proof}
Now we are ready to prove Theorem \ref{mainT1}.
\begin{proof}[Proof of Theorem \ref{mainT1}]
Assume on the contrary that (\ref{EFAB}) is true.
Define 
$\vartheta(n)=R_{A,B}(n)-cn$.  Then
$\vartheta(n)=o(n^{\frac14})$.
Furthermore, since
$$
A(n)B(n)\leq R_{A,B}(2n)=2cn+o(n^{\frac14}),
$$
we also have
$A(n),B(n)\leq 2c^\frac12n^\frac12$
for the sufficiently large $n$.

For $|z|<1$, let
$$
F(z)=\sum_{a\in A}z^a,\qquad G(z)=\sum_{b\in B}z^b.
$$
Clearly
$$
F(z)G(z)=\bigg(\sum_{a\in A}z^a\bigg)\cdot\bigg(\sum_{b\in B}z^b\bigg)=\sum_{n=0}^\infty z^n\sum_{\substack{a\in A,\ b\in B\\ a+b=n}}1=
\sum_{n=0}^\infty r_{A,B}(n)z^n.
$$
It follows that
$$
\frac{F(z)G(z)}{1-z}=\bigg(\sum_{n=0}^\infty r_{A,B}(n)z^n\bigg)\cdot\bigg(\sum_{n=0}^\infty z^n\bigg)
\sum_{n=0}^\infty z^n\sum_{j=0}^nr_{A,B}(j)=\sum_{n=0}^\infty R_{A,B}(n)z^n.
$$
Thus
\begin{align*}
\frac{(F(z)+G(z))^2}{4(1-z)}=&\frac{F(z)G(z)}{1-z}+\frac{(F(z)-G(z))^2}{4(1-z)}\\=
&c\sum_{n=0}^\infty nz^n+\sum_{n=0}^n\vartheta(n)z^n+\frac{(F(z)-G(z))^2}{4(1-z)},
\end{align*}
i.e.,
\begin{align*}
\frac{(F(z)+G(z))^2}{2}=\frac{2cz}{1-z}+2(1-z)\sum_{n=0}^n\vartheta(n)z^n+\frac{(F(z)-G(z))^2}{2}.
\end{align*}
Taking the derivative in $z$ of both sides of the above equation, we get
\begin{align}\label{FGFG}
(F'(z)+G'(z))(F(z)+G(z))=&\frac{2c}{(1-z)^2}+(F'(z)-G'(z))(F(z)-G(z))\notag\\
&+2(1-z)\sum_{n=1}^nn\vartheta(n)z^{n-1}-2\sum_{n=0}^n\vartheta(n)z^n.
\end{align}

Let $m$ be a large integer to be chosen later. 
Let $\rho=1-1/N$ and $z(\theta)=\rho e^{2\pi\sqrt{-1}\theta}$.
For convenience, we abbreviate $z(\theta)$ as $z$. Clearly for any $n_1,n_2\in\N$,
$$
\int_0^1 z^{n_1}\cdot \overline{z}^{n_2}d\theta=\begin{cases}\rho^{2n_1},&\text{if }n_1=n_2,\\
0,&\text{otherwise}.
\end{cases}
$$
Let
\begin{equation}\label{J}
J=\int_0^1 \big|(F'(z)+G'(z))(F(z)+G(z))\big|\cdot\bigg|\frac{1-z^m}{1-z} \bigg|^2d\theta,
\end{equation}
\begin{equation}\label{J1}
J_1=\int_0^1 \bigg| \frac{2c}{(1-z)^2} \bigg|\cdot\bigg|\frac{1-z^m}{1-z} \bigg|^2d\theta,
\end{equation}
\begin{equation}\label{J2}
J_2=\int_0^1 \bigg|2\sum_{n=0}^\infty \vartheta(n)z^n \bigg|\cdot\bigg|\frac{1-z^m}{1-z} \bigg|^2d\theta,
\end{equation}
\begin{equation}\label{J3}
J_3=\int_0^1 \bigg|2(1-z)\sum_{n=0}^\infty (n+1)v(n+1)z^n \bigg|\cdot\bigg|\frac{1-z^m}{1-z} \bigg|^2d\theta,
\end{equation}
and
\begin{equation}\label{J4}
J_4=\int_0^1 \big|\big(F'(z)-G'(z) \big)\big(F(z)-G(z) \big)\big|\cdot\bigg|\frac{1-z^m}{1-z} \bigg|^2d\theta.
\end{equation}
 
Evidently by (\ref{FGFG}), we have
$$
J\leq J_1+J_2+J_3+J_4.
$$
In \cite{Ho04}, Horv\'ath showed that
$$
J\gg m N^{\frac32},\qquad J_1,J_2\ll m^2N,\qquad J_3=o(m^{\frac12}N^{\frac 74}).
$$ 
We only need to give an upper bound for $J_4$. By the Cauchy-Schwarz inequality,
\begin{align*}
J_4\leq &4\int_0^1 \bigg|\frac{F'(z)-G'(z)}{1-z}\bigg|\cdot\bigg|\frac{F(z)-G(z)}{1-z}\bigg|d\theta\\
\leq&\frac{4}{\rho}\bigg(\int_0^1 \bigg|\frac{zF'(z)-zG'(z)}{1-z}\bigg|^2d\theta\bigg)^\frac12\bigg(\int_0^1 \bigg|\frac{F(z)-G(z)}{1-z}\bigg|^2d\theta\bigg)^\frac12.
 \end{align*}
Note that
$$
\frac{zF'(z)-zG'(z)}{1-z}=\frac1{1-z}\bigg(\sum_{a\in A}az^a-\sum_{b\in B}bz^b\bigg)=
\sum_{n=0}^\infty z^n\bigg(\sum_{\substack{a\in A\\ a\leq n}}a-\sum_{\substack{b\in B\\ b\leq n}}b\bigg).
$$
Applying (\ref{NABabdiff}) with $\beta=1/2$, we have
\begin{align*}
&\int_0^1 \bigg|\frac{zF'(z)-zG'(z)}{1-z}\bigg|^2d\theta\\
=&\int_0^1\bigg(\sum_{n=0}^\infty z^n\bigg(\sum_{\substack{a\in A\\ a\leq n}}a-\sum_{\substack{b\in B\\ b\leq n}}b\bigg)\bigg)\bigg(\sum_{n=0}^\infty \bar{z}^n\bigg(\sum_{\substack{a\in A\\ a\leq n}}a-\sum_{\substack{b\in B\\ b\leq n}}b\bigg)\bigg)d\theta
\\=&\sum_{n=0}^\infty\rho^{2n}\bigg(\sum_{\substack{a\in A\\ a\leq n}}a-\sum_{\substack{b\in B\\ b\leq n}}b\bigg)^2=o(N^{3}).
\end{align*}
Similarly, by (\ref{NABdiff}),
\begin{align*}
\int_0^1 \bigg|\frac{F(z)-G(z)}{1-z}\bigg|^2d\theta
=\sum_{n=0}^\infty\rho^{2n}\bigg(\sum_{\substack{a\in A\\ a\leq n}}1-\sum_{\substack{b\in B\\ b\leq n}}1\bigg)^2=o(N).
\end{align*}
Thus $J_4=o(N^2)$.

Since $J\leq J_1+J_2+J_3+J_4$, there exists a constant $C>1$ such that
$$
m N^{\frac32}\leq Cm^2N+o(m^{\frac12}N^{\frac 74})+o(N^2).
$$
By letting $m=C^{-2}N^{\frac12}$, we get an evident contradiction.
\end{proof}

\section{Proof of Theorem \ref{mainT2}}
\setcounter{lemma}{0}\setcounter{theorem}{0}\setcounter{proposition}{0}\setcounter{corollary}{0}
\setcounter{equation}{0}

Let us turn to Theorem \ref{mainT2}. 
\begin{lemma}\label{A3AkL}
Suppose that $0<\beta\leq 1/2$ and $A_1,\ldots,A_k$ are non-empty subsets of $\N$.
Assume that
$A_1(n),A_2(n)=\Theta(n^\beta)$ and $R_{A_1,\ldots,A_k}(n)=\Theta(n)$. Then
$$
R_{A_3,\ldots,A_k}(n)=\Theta(n^{1-2\beta}).
$$
\end{lemma}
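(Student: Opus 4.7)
The plan is to sandwich $R_{A_1,\ldots,A_k}(n)$ between two expressions of the form $A_1(\cdot)\,A_2(\cdot)\,R_{A_3,\ldots,A_k}(\cdot)$, one giving an upper and one a lower bound. Since $R_{A_1,\ldots,A_k}(n)=\Theta(n)$ and $A_1(n)A_2(n)=\Theta(n^{2\beta})$ by hypothesis, each sandwich instantly converts into one of the two bounds $R_{A_3,\ldots,A_k}(n)\ll n^{1-2\beta}$ and $R_{A_3,\ldots,A_k}(n)\gg n^{1-2\beta}$. No generating function machinery or delicate estimate is needed; the whole argument is a direct count.

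For the upper bound on $R_{A_3,\ldots,A_k}(n)$, I would observe that every choice of $a_1\in A_1$ with $a_1\le n$, $a_2\in A_2$ with $a_2\le n$, and $(a_3,\ldots,a_k)\in A_3\times\cdots\times A_k$ with $a_3+\cdots+a_k\le n$ produces a tuple whose total is at most $3n$. Hence
\[
R_{A_1,\ldots,A_k}(3n)\;\ge\;A_1(n)\,A_2(n)\,R_{A_3,\ldots,A_k}(n).
\]
Since the left side is $O(n)$ and $A_1(n)A_2(n)\gg n^{2\beta}$ by the lower bound in $A_1(n),A_2(n)=\Theta(n^\beta)$, dividing yields $R_{A_3,\ldots,A_k}(n)\ll n^{1-2\beta}$.

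For the lower bound, I would use the trivial inclusion in the other direction: any tuple $(a_1,\ldots,a_k)$ counted by $R_{A_1,\ldots,A_k}(n)$ satisfies $a_1\le n$, $a_2\le n$, and $a_3+\cdots+a_k\le n$, so
\[
R_{A_1,\ldots,A_k}(n)\;\le\;A_1(n)\,A_2(n)\,R_{A_3,\ldots,A_k}(n).
\]
Using $R_{A_1,\ldots,A_k}(n)\gg n$ together with the upper bound $A_1(n)A_2(n)\ll n^{2\beta}$ from $A_1(n),A_2(n)=\Theta(n^\beta)$ gives $R_{A_3,\ldots,A_k}(n)\gg n^{1-2\beta}$, and the two bounds combine to $R_{A_3,\ldots,A_k}(n)=\Theta(n^{1-2\beta})$. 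The only mild subtlety is the factor of $3$ appearing in the upper-bound argument, which is harmless because it is absorbed into the $\Theta$-notation; apart from that bookkeeping there is no real obstacle.
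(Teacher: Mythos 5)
Your proposal is correct and follows essentially the same route as the paper: the paper writes $R_{A_1,\ldots,A_k}$ as the convolution of $r_{A_1,A_2}$ with $r_{A_3,\ldots,A_k}$ and derives exactly your two sandwich inequalities, $R_{A_1,\ldots,A_k}(n)\leq A_1(n)A_2(n)R_{A_3,\ldots,A_k}(n)$ and $R_{A_1,\ldots,A_k}(3n)\geq A_1(n)A_2(n)R_{A_3,\ldots,A_k}(n)$, before dividing by $A_1(n)A_2(n)=\Theta(n^{2\beta})$. The only cosmetic difference is that you phrase the counting directly on tuples rather than through the convolution sums, which changes nothing.
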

\begin{proof} Evidently
\begin{align*}
R_{A_1,\ldots,A_k}(n)=&\sum_{u=0}^nr_{A_1,\ldots,A_k}(u)=
\sum_{\substack{0\leq v,w\leq n\\ v+w=n}}r_{A_1,A_2}(v)r_{A_3,\ldots,A_k}(w)\\
\leq&\sum_{v=0}^nr_{A_1,A_2}(v)\sum_{w=0}^nr_{A_3,\ldots,A_k}(w)\leq A_1(n)A_2(n)\sum_{w=0}^nr_{A_3,\ldots,A_k}(w).
\end{align*}
Since $R_{A_1,\ldots,A_k}(n)\gg n$ and $A_1(n),A_2(n)\ll n^{\beta}$, we get that
$$
\sum_{w=0}^nr_{A_3,\ldots,A_k}(w)\geq\frac{R_{A_1,\ldots,A_k}(n)}{A_1(n)A_2(n)}\gg n^{1-2\beta}.
$$
On the other hand, we also have
\begin{align*}
R_{A_1,\ldots,A_k}(3n)=&
\sum_{\substack{0\leq v,w\leq 3n\\ v+w=3n}}r_{A_1,A_2}(v)r_{A_3,\ldots,A_k}(w)\geq
\sum_{v=0}^{2n}r_{A_1,A_2}(v)\sum_{w=0}^{n}r_{A_3,\ldots,A_k}(w)\\
\geq&A_1(n)A_2(n)\sum_{w=0}^nr_{A_3,\ldots,A_k}(w)\gg n^{2\beta}\sum_{w=0}^nr_{A_3,\ldots,A_k}(w).
\end{align*}
It follows from $R_{A_1,\ldots,A_k}(3n)\ll n$  that
$$
\sum_{w=0}^nr_{A_3,\ldots,A_k}(w)\ll n^{1-2\beta}. 
$$
\end{proof}
Assume on the contrary that (\ref{EFAk}) holds. Let $\vartheta(n)=r_{A_1,\ldots,A_k}(n)-cn$.
Let
$$
F_i(z)=\sum_{a\in A_i}z^a
$$
for each $1\leq i\leq k$. Then we have
\begin{align*}
\frac{F_1(z)F_2(z)\cdots F_k(z)}{1-z}=\sum_{n=0}^\infty r_{A_1,\ldots,A_k}(n)z^n
=\frac{cz}{(1-z)^2}+\sum_{n=0}^\infty\vartheta(n)z^n,
\end{align*}
i.e.,
\begin{align*}
&\big(F_1(z)+F_2(z)\big)^2F_3(z)\cdots F_k(z)\\
=&\frac{4cz}{1-z}+4(1-z)\sum_{n=0}^\infty\vartheta(n)z^n+\big(F_1(z)-F_2(z)\big)^2F_3(z)\cdots F_k(z).
\end{align*}
Taking the derivative in $z$, we obtain that
\begin{align*}
&2\big(F_1'(z)+F_2'(z)\big)\big(F_1(z)+F_2(z)\big)\prod_{j=3}^k F_j(z)+
\big(F_1(z)+F_2(z)\big)^2\sum_{j=3}^kF_j'(z)\prod_{\substack{3\leq i\leq k\\ i\neq j}}F_i(z)\\
=&2\big(F_1'(z)-F_2'(z)\big)\big(F_1(z)-F_2(z)\big)\prod_{j=3}^k F_j(z)+
\big(F_1(z)-F_2(z)\big)^2\sum_{j=3}^kF_j'(z)\prod_{\substack{3\leq i\leq k\\ i\neq j}}F_i(z)\\
&+\frac{4c}{(1-z)^2}+4(1-z)\sum_{n=0}^\infty (n+1)\vartheta(n+1)z^n-4\sum_{n=0}^\infty\vartheta(n)z^n.
\end{align*}

Let $\rho=1-1/N$, $z=\rho e^{2\pi\sqrt{-1}\theta}$ and let $m$ be a large integer to be chosen later. 
Let
\begin{align*}
J=&\int_0^1 \bigg|2\big({F_1}'(z)+{F_2}'(z) \big)\prod_{j=3}^k F_j(z)+\big(F_1(z)+F_2(z)\big)\sum_{j=3}^kF_j'(z)\prod_{\substack{3\leq i\leq k\\ i\neq j}}F_i(z)\bigg|\\
 &\cdot\big| F_1(z)+F_2(z)\big|\cdot\bigg|\frac{1-z^m}{1-z} \bigg|^2d\theta,
 \end{align*}
and
\begin{align*}
J_4=&\int_0^1 \bigg|2\big({F_1}'(z)-{F_2}'(z) \big)\prod_{j=3}^k F_j(z)+\big(F_1(z)-F_2(z)\big)\sum_{j=3}^kF_j'(z)\prod_{\substack{3\leq i\leq k\\ i\neq j}}F_i(z)\bigg|\\
 &\cdot\big| F_1(z)-F_2(z)\big|\cdot\bigg|\frac{1-z^m}{1-z} \bigg|^2d\theta.\end{align*}
And let $J_1,J_2,J_3$ be the same ones in (\ref{J1})-(\ref{J3}) respectively.

First, we shall give a lower bound of $J$. Let
$$
G(z):=2\big({F_1}'(z)+{F_2}'(z)\big)\prod_{j=3}^k F_j(z)+\big(F_1(z)+F_2(z)\big)\sum_{j=3}^kF_j'(z)\prod_{\substack{3\leq i\leq k\\ i\neq j}}F_i(z).
$$
Write $$
G(z)=\sum_{n=0}^\infty g_n z^n,\qquad 
\big({F_1}'(z)+{F_2}'(z)\big)\prod_{j=3}^k F_j(z)=\sum_{n=0}^\infty h_n z^n.$$
Clearly $g_n\geq 2h_n\geq 0$ for each $n\geq 0$. Let $\cA$ denote the multiset $A_1\cup A_2$, i.e., 
the common elements of $A_1$ and $A_2$ have the multiplicity $2$ in $\cA$. Then
$$
F_1(z)+F_2(z)=\sum_{a\in A_1}z^a+\sum_{a\in A_2}z^a=\sum_{a\in\cA}z^a.
$$
Thus
\begin{align*}
J\geq&\bigg|\int_0^1 \overline{G(z)}\cdot\big(F_1(z)+F_2(z)\big)\cdot\frac{1-z^m}{1-z}\cdot\frac{1-\overline{z}^m}{1-\overline{z}}d\theta\bigg|\\
=&\bigg|\int_0^1 \bigg(\sum_{n=0}^\infty g_n \overline{z}^n\bigg)\cdot\bigg(\sum_{a\in\cA} z^a\bigg)\cdot 
\bigg(\sum_{n=0}^{m-1} z^n\bigg)\cdot \bigg(\sum_{n=0}^{m-1} \overline{z}^n\bigg)d\theta\bigg|\\
=&\sum_{\substack{a\in\cA,\ u\geq 0\\ 0\leq v,w\leq m-1\\ a+v=u+w}} \rho^{a+u+v+w}g_{u}\geq 
2\sum_{\substack{a\in\cA,\ u\geq 0\\ 0\leq v,w\leq m-1\\ a+v=u+w}} \rho^{a+u+v+w}h_{u}.
\end{align*}

Note that
$$
\big({F_1}'(z)+{F_2}'(z)\big)\prod_{j=3}^k F_j(z)=\sum_{a\in\cA}a z^{a-1}\cdot\sum_{n=0}^\infty r_{A_3,\ldots,A_k}(n) z^n=
\sum_{n=0}^\infty z^{n}\sum_{\substack{a\in\cA\\ a\leq n+1}}ar_{A_3,\ldots,A_k}(n-a+1).
$$
It follows that
\begin{align*}
\sum_{\substack{a\in\cA,\ u\geq 0\\ 0\leq v,w\leq m-1\\ a+v=u+w}} \rho^{a+u+v+w}h_{u}=
&\sum_{\substack{a\in\cA,\ u\geq 0\\ 0\leq v,w\leq m-1\\ a+v=u+w}} \rho^{a+u+v+w}
\sum_{\substack{b\in\cA\\ b\leq u+1}}br_{A_3,\ldots,A_k}(u+1-b)\\
\geq&\sum_{\substack{a,b\in\cA,\ b\leq u\\ 0\leq v,w\leq m-1\\ a+v=u+w}} \rho^{a+u+v+w}
\cdot br_{A_3,\ldots,A_k}(u+1-b).
\end{align*}
We may restrict the above summation to those $a,b,u,v,w$ satisfying the following conditions:\medskip

(1) $c_3N\leq a\leq N$, where $c_3=\big(c_1/(4c_2)\big)^{\frac 1\beta}$;\medskip

(2) $b=a$, $a\leq u\leq N$;\medskip

(3) $1\leq w<m/2$, $w\leq v\leq w+m/2$.\medskip

\noindent Thus
\begin{align*}
J\geq&2\sum_{\substack{a\in\cA,\ c_3N\leq a\leq N\\ 0\leq w<m/2,\ w\leq v\leq w+m/2\\ a\leq u\leq N,\ u-a=v-w}} \rho^{a+u+v+w}
\cdot ar_{A_3,\ldots,A_k}(u+1-a)\\
\geq&2\sum_{\substack{a\in\cA\\ c_3N\leq a\leq N}} a\rho^{a+(w-v+a)+v+w}
\sum_{\substack{0\leq w<m/2\\ w\leq v\leq w+m/2} }r_{A_3,\ldots,A_k}(v-w+1)\\
\geq&2c_3N\cdot\rho^{2N+2m}\big(\cA(N)-\cA(c_3N)\big)\cdot
\frac{m}{2}\sum_{j=0}^{m/2} r_{A_3,\ldots,A_k}(j+1).
\end{align*}
We have $\cA(N)\geq A_1(N)\geq  c_1N^\beta$ and
$$
\cA(c_3N)\leq A_1(c_3N)+A_2(c_3N)\leq 2c_2c_3^\beta N^\beta=\frac{c_1}{2}N^\beta.
$$
Furthermore, since $m\leq N$,
$$
\rho^{2N+2m}\geq\bigg(1-\frac1N\bigg)^{4N}\geq\frac1{2e^4}.
$$
Hence by Lemma \ref{A3AkL},
\begin{align*}
J\gg N^{1+\beta}m\sum_{j=0}^{m/2} r_{A_3,\ldots,A_k}(j+1)\gg m^{2-2\beta}N^{1+\beta}.
\end{align*}

Let us consider the upper bound $J_4$. Clearly
\begin{align*}
J_4\leq &8\int_0^1 \bigg|\frac{F_1'(z)-F_2'(z)}{1-z}\bigg|\cdot\bigg|\frac{F_1(z)-F_2(z)}{1-z}\bigg|\cdot\prod_{j=3}^k|F_j(z)|d\theta\\
 &+4\sum_{j=3}^k\int_0^1\bigg|\frac{F_1(z)-F_2(z)}{1-z}\bigg|^2\cdot|F_j'(z)|\prod_{\substack{3\leq i\leq k\\ i\neq j}}|F_i(z)|d\theta.
 \end{align*}
Note that
\begin{align}
\big|F_3(z)\cdots F_k(z) \big|&\leq \big|F_3(\rho)\cdots F_k(\rho) \big|=\sum_{n=0}^{\infty}\rho^n\cdot r_{A_3,\ldots,A_k}(n).
\end{align}
Let
$$
\omega(x)=\sum_{n\leq x}r_{A_3,\ldots,A_k}(n).
$$
By Lemma \ref{A3AkL}, $\omega(x)\ll x^{1-2\beta}$.
So letting $\eta=-\log\rho$, we have
\begin{align*}
\sum_{n=0}^{\infty}\rho^n\cdot r_{A_3,\ldots,A_k}(n)=&
\int_0^{+\infty}\rho^xd\omega(x)
=\rho^x\omega(x)\big|_0^{+\infty}-\log\rho
\int_0^{+\infty}\omega(x)\cdot \rho^xd x\\
\ll&\eta\int_0^{+\infty} x^{1-2\beta}e^{-\eta x}d x=\frac{\Gamma(2-2\beta)}{\eta^{1-2\beta}}\ll N^{1-2\beta}.
\end{align*}
Thus by the Cauchy-Schwarz inequality,
\begin{align*}
&\int_0^1 \bigg|\frac{F_1'(z)-F_2'(z)}{1-z}\bigg|\cdot\bigg|\frac{F_1(z)-F_2(z)}{1-z}\bigg|\cdot\prod_{j=3}^k|F_j(z)|d\theta\\
\ll&N^{1-2\beta}\bigg(\int_0^1 \bigg|\frac{F_1'(z)-F_2'(z)}{1-z}\bigg|^2d\theta\bigg)^{\frac12}\cdot\bigg(\int_0^1\bigg|\frac{F_1(z)-F_2(z)}{1-z}\bigg|^2d\theta\bigg)^{\frac12}.
\end{align*}
Applying Lemma \ref{NABdiffT}, we obtain that
\begin{align*}
\int_0^1 \bigg|\frac{F'(z)-G'(z)}{1-z}\bigg|^2d\theta\leq&\frac{1}{\rho}\int_0^1 \bigg|\frac{zF'(z)-zG'(z)}{1-z}\bigg|^2d\theta\\
=&\frac1{\rho}\sum_{n=0}^\infty\rho^{2n}\bigg(\sum_{\substack{a_1\in A_1\\ a_1\leq n}}a_1-\sum_{\substack{a_2\in A_2\\ a_2\leq n}}a_2\bigg)^2=o(N^{2+2\beta}),
\end{align*}
and
\begin{align*}
\int_0^1 \bigg|\frac{F(z)-G(z)}{1-z}\bigg|^2d\theta
=\sum_{n=0}^\infty\rho^{2n}\bigg(\sum_{\substack{a_1\in A_1\\ a_1\leq n}}1-\sum_{\substack{a_2\in A_2\\ a_2\leq n}}1\bigg)^2=o(N^{2\beta}).
\end{align*}
So
\begin{align*}
\int_0^1 \bigg|\frac{F_1'(z)-F_2'(z)}{1-z}\bigg|\cdot\bigg|\frac{F_1(z)-F_2(z)}{1-z}\bigg|\cdot\prod_{j=3}^k|F_j(z)|d\theta
\ll N^{1-2\beta}\cdot o(N^{1+2\beta})=o(N^2).
\end{align*}

Similarly, for each $3\leq j\leq k$,
\begin{align*}
|F_j'(z)|\prod_{\substack{3\leq i\leq k\\ i\neq j}}|F_i(z)|\leq&
|F_j'(\rho)|\prod_{\substack{3\leq i\leq k\\ i\neq j}}|F_i(\rho)|=
\sum_{a_3\in A_3,\ldots,a_k\in A_k} a_j\rho^{a_3+\cdots+a_k}\\
=&\sum_{n=0}^\infty  \rho^{n}\sum_{\substack{a_3\in A_3,\ldots,a_k\in A_k\\ a_3+\cdots+a_k=n}} a_j\leq\sum_{n=0}^\infty \rho^{n}\cdot r_{A_3,\ldots,A_k}(n)n.
\end{align*}
Now
\begin{align*}
&\sum_{n=0}^{\infty}\rho^nn\cdot r_{A_3,\ldots,A_k}(n)=
\int_0^{+\infty}\rho^xxd\omega(x)\\
=&\rho^xx\cdot\omega(x)\big|_0^{+\infty}-
\int_0^{+\infty}\omega(x)\cdot \rho^x(1+x\log\rho)d x\\
\ll&\eta\int_0^{+\infty} x^{1-2\beta}e^{-\eta x}d x+\int_0^{+\infty} x^{1-2\beta}e^{-\eta x}d x\\
=&\frac{\Gamma(3-2\beta)+\Gamma(2-2\beta)}{\eta^{2-2\beta}}\ll N^{2-2\beta}.
\end{align*}
It follows that
\begin{align*}
\int_0^1 \bigg|\frac{F(z)-G(z)}{1-z}\bigg|^2\cdot |F_j'(z)|\prod_{\substack{3\leq i\leq k\\ i\neq j}}|F_i(z)|d\theta
\ll&N^{2-2\beta}\int_0^1 \bigg|\frac{F(z)-G(z)}{1-z}\bigg|^2d\theta\\
=&
N^{2-2\beta}\cdot o(N^{2\beta})=o(N^2).
\end{align*}
Thus we get
$$
J_4=o(N^2).
$$

Recall that $J\leq J_1+J_2+J_3+J_4$ and 
$$
J_1,J_2\ll m^2N,\qquad J_3=o(m^{\frac12}N^{\frac 74}).
$$
We may choose a large constant $C>1$ such that
$$
m N^{\frac32}\leq Cm^2N+o(m^{\frac12}N^{\frac 74})+o(N^2).
$$
It immediately leads to a contradiction by setting $m=C^{-2}N^{\frac12}$.

\section{Proof of Theorem \ref{mainT3}}
\setcounter{lemma}{0}\setcounter{theorem}{0}\setcounter{proposition}{0}\setcounter{corollary}{0}
\setcounter{equation}{0}

Here we only give the proof of (i) of Theorem \ref{mainT3}, since the proof of (ii) is completely same.

Suppose that $N$ is sufficiently large and $\rho=1-1/N$. Let $\vartheta(n)=R_{A,B}(n)-cn$ and let $J,J_1,J_2,J_3,J_4$  be given by (\ref{J})-(\ref{J4}).
We shall give the upper bounds of $J_2,J_3$ under the assumption
\begin{equation}
\varpi(x):=\sum_{n\leq x}\vartheta(n)^2=o(x^{\frac{3}{2}}),\qquad x\to+\infty.
\end{equation}

By the Cauchy-Schwarz inequality,
\begin{align*}
J_2=&\int_0^1 \bigg|\sum_{n=0}^\infty \vartheta(n)z^n \bigg|\cdot\bigg|\frac{1-z^m}{1-z} \bigg|^2d\theta\\
\leq&\bigg(\int_0^1 \bigg|\sum_{n=0}^\infty \vartheta(n)z^n \bigg|^2d\theta\bigg)^{\frac12}\cdot\bigg(\int_0^1\bigg|\frac{1-z^m}{1-z} \bigg|^4d\theta\bigg)^{\frac12},
\end{align*}
where $z=\rho e^{2\pi\sqrt{-1}\theta}$.
Clearly
$$
\int_0^1\bigg|\frac{1-z^m}{1-z} \bigg|^4d\theta\leq\sum_{\substack{0\leq a,b,c,d\leq m-1\\ a+b=c+d}}1\leq m^3.
$$
And
$$
\int_0^1 \bigg|\sum_{n=0}^\infty \vartheta(n)z^n \bigg|^2d\theta
=\sum_{n=0}^\infty \vartheta(n)^2\rho^{2n}.
$$
Since $\varpi(x)=o(x^{\frac32})$, for any $\epsilon>0$, there exists $x_0=x_0(\epsilon)>0$ such that for any $x\geq x_0$, $\varpi(x)\leq \epsilon x^{\frac32}$. Note that trivially $\varpi(x)\leq x^5$ for any $x\geq 0$. Letting $\eta=-\log\rho$, we have
\begin{align*}
\sum_{n=0}^\infty \vartheta(n)^2\rho^{2n}=\int_0^{+\infty}\rho^{2x}d\varpi(x)
=&\varpi(x)\rho^{2x}\big|_0^{+\infty}-2\log\rho\int_0^{+\infty}\varpi(x)\rho^{2x}d x\\
\leq &2\eta\int_0^{+\infty}\epsilon x^{\frac 32}\cdot e^{2\eta x}d x+2\eta\int_0^{x_0}x^5d x\\
\leq&
\frac{\epsilon\cdot\Gamma(\frac 52)}{(2\eta)^{\frac 32}}+\frac{\eta x_0^6}{3}\leq2\epsilon N^{\frac32},
\end{align*}
provided that $N\geq\epsilon^{-1}x_0^6$.
So$$
\sum_{n=0}^\infty \vartheta(n)^2\rho^{2n}=o(N^{\frac32})
$$
as $N\to+\infty$, and
$$
J_2=o(m^{\frac32}N^{\frac 34}).
$$

On the other hand, since $|1-z^m|\leq 2$, we have
\begin{align*}
J_3=&\int_0^1 \bigg|(1-z)\sum_{n=0}^\infty (n+1)\vartheta(n+1)z^n \bigg|\cdot\bigg|\frac{1-z^m}{1-z} \bigg|^2d\theta\\
\leq&2\int_0^1 \bigg|\sum_{n=0}^\infty (n+1)\vartheta(n+1)z^n \bigg|\cdot\bigg|\frac{1-z^m}{1-z} \bigg|d\theta\\
\leq&2\bigg(\int_0^1 \bigg|\sum_{n=0}^\infty (n+1)\vartheta(n+1)z^n\bigg|^2d\theta\bigg)^{\frac12}\cdot\bigg(\int_0^1 \bigg|\frac{1-z^m}{1-z}\bigg|^2d\theta\bigg)^{\frac12}.
\end{align*}
Clearly
$$
\int_0^1 \bigg|\frac{1-z^m}{1-z}\bigg|^2d\theta\leq m.
$$
And
\begin{align*}
\int_0^1 \bigg|\sum_{n=0}^\infty (n+1)\vartheta(n+1)z^n\bigg|^2d\theta=
\sum_{n=0}^\infty (n+1)^2\vartheta(n+1)^2\rho^{2n}\leq 
4\sum_{n=0}^\infty n^2\vartheta(n)^2\rho^{2n}.
\end{align*}
Note that $(x^2\rho^{2x})'=2x\rho^{2x}+2\log\rho\cdot x^2\rho^{2x}$. For any $\epsilon>0$, we have
\begin{align*}
\sum_{n=0}^\infty n^2\vartheta(n)^2\rho^{2n}=&\int_0^{+\infty} x^2\rho^{2x}d\varpi(x)=-2\int_0^\infty(x\rho^{2x}+\log\rho\cdot x^2\rho^{2x})\cdot\varpi(x)d x\\
\leq&2\eta\int_0^{+\infty}\epsilon x^{\frac 32}\cdot x^2e^{2\eta x}d x+
2\int_0^{+\infty}\epsilon x^{\frac 32}\cdot xe^{2\eta x} d x+2\int_0^{x_0}(x+\eta\cdot x^2)x^5d x\\
=&\frac{\epsilon\cdot(\Gamma(\frac 92)+2\Gamma(\frac 72))}{(2\eta)^{\frac 72}}+\frac{2x_0^{7}}{7}+\frac{\eta x_0^{8}}{4}\leq 4N^{\frac 72},
\end{align*}
provided that $N\geq \epsilon^{-1} x_0^8$.
Hence
$$
J_3=o(m^{\frac12}N^{\frac74}).
$$

Finally, for any $\epsilon>0$, clearly
$$
\sum_{n\leq N}\vartheta(n)^2\leq \epsilon N^{\frac{3}{2}}
$$
implies  $\vartheta(n)\leq \epsilon^{\frac12} N^{\frac{3}{4}}$. Thus we have
$$
R_{A,B}(n)=cn+o(n^{\frac34})
$$
as $n\to+\infty$. Since $R_{A,B}(n)=\Psi(n)$, under the assumptions of Theorem \ref{mainT1}, we know that
$J\gg m N^{\frac 32}$, $J_4=o(N^2)$ and $J_1=O(m^2N)$. It follows from $J\leq J_1+J_2+J_3+J_4$ that
$$
m N^{\frac 32}\leq Cm^2N+o(m^{\frac32}N^{\frac 34})+o(m^{\frac12}N^{\frac74})+o(m^{\frac12}N^{\frac74})
$$
for some constant $C>1$. Setting $m=C^{-2}N^{\frac12}$, we immediately get a contradiction whenever $N$ is sufficiently large.\qed

\end{document}